\newtheorem{theorem}{Theorem}
\newtheorem{proposition}{Proposition}
\newtheorem{question}{Question}
\begin{document}

\title[Brieskorn manifolds, generalised Sieradski groups and coverings]{Brieskorn manifolds, generalised Sieradski groups, and coverings of lens spaces}

\author{Tatyana Kozlovskaya}
\address{Magadan Institute of Economics, 685000, Magadan, Russia}
\email{konus$\_$magadan@mail.ru}

\author{Andrei Vesnin}
\address{Sobolev Institute of Mathematics, 630090, Novosibirsk, Russia}
\email{vesnin@math.nsc.ru}

\thanks{The research was done under partial financial support by RFBR (grant 15-01-07906).}

\subjclass[2000]{57M05; 20F05; 57M50}

\keywords{Three-dimensional manifold, Brieskorn manifold, cyclically presented group, Sieradski group, lens space, branched covering.} 

\dedicatory{To Sergei Matveev on the occasion of his seventieth birthday}

\maketitle

\begin{abstract}
The Brieskorn manifolds $B(p,q,r)$ are the $r$-fold cyclic coverings of the 3-sphere $S^{3}$ branched over the torus knot  $T(p,q)$. The generalised Sieradski groups $S(m,p,q)$ are groups with $m$-cyclic pre\-sen\-tation $G_{m}(w)$, where defining word $w$ has a special form, depending of $p$ and $q$. In particular, $S(m,3,2) = G_{m}(w)$ is the group with $m$ generators $x_{1}, \ldots, x_{m}$ and $m$ defining relations $w(x_{i}, x_{i+1}, x_{i+2})=1$, where  $w(x_{i}, x_{i+1}, x_{i+2}) = x_{i} x_{i+2} x_{i+1}^{-1}$. Presentations of $S(2n,3,2)$ in a certain form  $G_{n}(w)$ were investigated by Howie and Williams. They proved that the $n$-cyclic presentations are  geometric, i.e., correspond to the spines of closed orientable 3-manifolds. We establish an analogous result for the groups $S(2n,5,2)$. It is shown that in both cases the manifolds are $n$-fold cyclic branched coverings of lens spaces. To classify some of constructed manifolds we used the Matveev's computer program ``Recognizer''.  
\end{abstract}

\tableofcontents 

\section{Three-manifolds with cyclic symmetry} \label{sec1} 

\subsection{Examples of manifolds with cyclic symmetry}

In this paper we discuss a relation between 3-dimensional closed orientable Brieskorn manifolds and the generalised Sieradski groups. The Brieskorn manifolds $B(p,q,r)$, firstly introduced in~\cite{Brieskorn}, are the $r$-fold cyclic coverings of the 3-sphere $S^{3}$ branched over the torus knots $T(p,q)$. The generali\-sed Sieradski groups $S(m,p,q)$, introduces in  \cite{CavicchioliHegenbarthKim} for  positive integers $p$ and $q$, where $p = 1 + d q$ and $d \geq 1$, are groups with cyclic presentation $G_{m}(w)$, where defining word $w$ will be described in subsection~\ref{subsection2.3}. In particular, $S(m,3,2) = G_{m}(w)$ is the group with  $m$ generators $x_{1}, \ldots, x_{m}$ and $m$ defining relations $w(x_{i}, x_{i+1}, x_{i+2})=1$, where $w(x_{i}, x_{i+1}, x_{i+2}) = x_{i} x_{i+2} x_{i+1}^{-1}$. 

Cyclic presentations of the groups $S(2n,3,2)$ (the case $q=2$ and $d=1$) in the form $G_{n}(w)$ were investigated by Howie and Williams in~\cite{Howie}. Namely, they proved that $n$-cyclic presentations of $S(2n,3,2) = G_{n} (x_{i} x_{i+2} x_{i+1}^{-1})$ are \emph{geometric}, i.e., correspond to spines of closed orientable 3-manifolds. Here we establish the analogous result for the groups $S(2n,5,2)$ (the case $q=2$ and $d=2$). More exactly, theorem~\ref{teo4} states that $n$-cyclic presentations 
$$
S(2n,5,2) = G_{n} ( x_{i} x_{i+1} x_{i+2}^{2} x_{i+3} x_{i+4} x_{i+3}^{-1} x_{i+2}^{-1} x_{i+1} x_{i+2} x_{i+3} x_{i+2}^{-1} x_{i+1}^{-1}) 
$$ 
are geometric. Propositions~\ref{ass1} and~\ref{ass2} state that in both cases manifolds are $n$-fold cyclic branched coverings of lens spaces.  

For last decades topological and geometrical properties of 3-manifolds are intensively studying from various points of view. An approach to constructing and describing a manifold depends on a problem posted. The approach can be based on a triangulation of a manifold, its Heegaard splitting, surgery description, branched covering description, etc. Any app\-ro\-ach can be useful in a suitable context. For basic definitions and facts of 3-manifold theory we refer to Hempel's book~\cite{Hempel}. One more useful approach is based on presenting 3-manifolds by spines. The theory of spines of 3-manifolds was developed in papers by Matveev, his students and followers. Main results the theory of spines are presented in his book~\cite{MatveevBook}. Methods and results on tabulating of 3-manifolds are also presented in his survey~\cite{MatveevSurvey}. The presentation of 3-manifolds by their spines is realised in the program tools \emph{3-manifold Recognizer}~\cite{Recognizer}, which was created and developed by Matveev and his group. The tools contain the original software for distinguishing and investiga\-ting 3-manifolds as well as the huge database of 3-manifolds, presented by their spines. In particular, the developed software is useful for studying  symmetry groups of 3-manifolds and for understanding covering properties of 3-manifolds in a lot of concrete cases.  

In this paper we will consider connected closed orientable 3-manifolds with cyclic symmetry which acts with fixed points. Moreover, we will interested in cases when the symmetry corresponds to a presentation of the manifold as a cyclic branched covering of the 3-sphere $S^{3}$ or of the lens spaces. Examples of such manifolds are well known in the literature. Let us recall some of them.  
\begin{itemize}
\item The spherical and hyperbolic dodecahedral spaces constructed by Weber and Seifert in 1931 in the paper~\cite{WeberSeifert}: the first is the $3$-fold cyclic covering of $S^{3}$ branched over the trefoil knot, and the second is a $5$-fold cyclic covering of $S^{3}$ branched over the 2-component Whitehead link. 
\item The smallest volume closed orientable hyperbolic 3-manifold, disco\-ve\-r\-ed by Matveev and Fomenko~\cite{MatveevFomenko} and independently by Weeks~\cite{Weeks}, is the $3$-fold cyclic covering of $S^{3}$  branched over the 2-bridge knot~$7/3$~\cite{MednykhVesnin1}. 
\item  The Fibonacci manifolds, constructed by Helling, Kim and Mennicke~\cite{HellingKimMennicke}, are the $n$-fold cyclic coverings of $S^{3}$ branched over the figure-eight knot.  
\item The Sieradski manifolds, defined by Sieradski in~\cite{S} and investigated by Cavicchioli, Hegenbarth and Kim in~\cite{CavicchioliHegenbarthKim}, are the $n$-fold cyclic coverings of $S^{3}$ branched over the trefoil knot.  
\end{itemize}  

As one knows, the trefoil knot belongs to the family of torus knots (it has the corresponding notation $T(3,2)$). Thus, the Sieradski manifolds from~\cite{CavicchioliHegenbarthKim, S} belong to the bigger class of manifolds, known as the Brieskorn manifolds. 

\subsection{Brieskorn manifolds}

Recall that Brieskorn~\cite{Brieskorn}  (see also Milnor's book~\cite{Milnor68}) initiated a study of the following objects. For positive integer $\alpha_{1}, \alpha_{2}, \ldots, \alpha_{n+1} \geq 2$ consider the polynomial    
$$
f(z_{1}, z_{2}, \ldots, z_{n+1}) = (z_{1})^{\alpha_{1}} + (z_{2})^{\alpha_{2}} + \cdots + (z_{n+1})^{\alpha_{n+1}}.  
$$

\emph{A Brieskorn manifold} $\mathcal B(\alpha_{1}, \alpha_{2}, \ldots, \alpha_{n+1})$ is defined as the intersection of the complex hyperplane $V = f^{-1} (0)$ with the $(2n+1)$--dimensional sphere of unit radius    
$$
S^{2n+1} = \{ (z_{1}, z_{2}, \ldots, z_{n+1}) \in \mathbb C^{n+1} \, : \, |z_{1}|^{2} + |z_{2}|^{2} + \cdots + |z_{n+1}|^{2} = 1 \}.
$$

Brieskorn manifolds are smooth manifolds of dimension $2n-1$. The interest to these manifolds is motivated, in particular,  by the following surprising fact, proved by Milnor~~\cite{Milnor68}: Brieskorn manifolds, corresponding to polynomials 
$$
f(z_1, z_2, z_3, z_4, z_5) = z_1^{6k-1} +  z_2^3 + z_3^2 + z_4^2 + z_5^2
$$ 
where $k=1, 2, \ldots, 28$, are $28$ pairwise non-diffeomorphic exotic spheres and each of them is homeomorphic to the usual 7-dimensional sphere. 

We will interested in the case when Brieskorn manifolds are 3-dimensional. Similar to Milnor's notation from~\cite{Milnor75}, for positive integers $p,q,r \geq 2$ we denote  
$$
\mathcal B(p,q,r) = \{ (z_{1}, z_{2}, z_{3}) \in \mathbb C^{3} \, : \, z_{1}^{p} + z_{2}^{q} + z_{3}^{r} = 0, \quad |z_{1}|^{2} + |z_{2}|^{2} + |z_{3}|^{2} = 1 \}.  
$$ 
According to lemma~1.1 from~\cite{Milnor75}, the manifold $\mathcal B(p,q,r)$ is the $r$-fold cyclic covering of the sphere $S^{3}$, branched over the torus link $T(p,q)$. Since parameters $p$, $q$, $r$ can be permuted in the definition of $\mathcal B(p,q,r)$, this manifold is also the $q$-fold cyclic covering of $S^{3}$, branched over $T(r,p)$, as well as the $p$-fold cyclic covering of $S^{3}$, branched over $T(q,r)$. Recall that the torus link $T(p,q)$ can be defined as the set of points $(z_1, z_2)$ on the unit sphere which satisfy the equation $z_1^p + z_2^q = 0$. Such a link has $d$ components, where $d = \gcd (p,q)$. An $n$-th component, $1 \leq n \leq d$, can be parametrized as the following: 
$$
z_1 = \exp(2\pi i t/ p), \qquad z_2 =\exp(2 \pi i (t+n+ 1/2) /q ) 
$$
for $0 \leq t \leq pq/d$.

Another definition of the torus links can be done in terms of braids. Let $B_{p}$ be the group of geometrical braids on $p$ strings with standard generators $\sigma_{1}, \sigma_{2}, \ldots, \sigma_{p-1}$. Then the closure of the braid $(\sigma_{1} \sigma_{2} \cdots \sigma_{p-1})^{q}$ is the torus link $T(p,q)$. 

It was shown in~\cite{Milnor75} that the Brieskorn manifold $\mathcal B(p,q,r)$ is spherical if $1/p + 1/q + 1/r > 1$; nilpotent if $1/p + 1/q + 1/r =1$; and $\widetilde{\operatorname{SL}}(2, \mathbb R)$-manifold if $1/p + 1/q + 1/r < 1$. 

\section{Cyclically presented groups} \label{sec2}

\subsection{Cyclic presentation and defining word} 

Let $\mathbb F_{n}$ be the free group of rank $m \geq 1$ with generators $x_{1}, x_{2}, \ldots, x_{m}$ and let $w = w(x_{1}, x_{2}, \ldots , x_{m})$ be a cyclically reduced word in $\mathbb F_{m}$. Let $\eta : \mathbb F_{m} \to F_{m}$ be an automorphism given by $\eta (x_i) = x_{i+1}$, $i=1,\dots,m-1$, and $\eta (x_m) = x_1$. The presentation 
$$
G_m (w) \, = \, \langle x_1, \dots , x_m \ | \ w=1, \, \eta (w)=1, \, \dots, \, \eta^{m-1} (w)=1 \rangle ,
$$
is called an $m$-\emph{cyclic presentation} with \emph{defining word} $w$. A group $G$ is said to be \emph{cyclically presented group} if $G$ is isomorphic $G_{m}(w)$ for some $m$ and $w$~\cite{Johnson}. Beside purely algebraic questions (finiteness, commensurability, arithmeticity, etc.), the following two questions about cyclically presented groups were posted and investigated by many authors (see \cite{CavicchioliHegenbarthKim, Howie} for example). 

\begin{question}
What cyclically presented groups are isomorphic to the fundamental groups of closed connected orientable 3-manifolds? 
\end{question}

A 2-dimensional subpolyhedron $P$ of a closed connected 3-manifold $M$ is called a \emph{spine} of $M$ if $M \setminus \operatorname{Int} B^{3}$ collapses to $P$, where $B^{3}$ is a 3-ball in $M$ \cite{MatveevBook}. We shall say that a finite presentation $\mathcal P$ corresponds to a spine of a 3-manifold if its presentation complex $K_{\mathcal P}$ is a spine of a 3-manifold. 

\begin{question}
What cyclic presentations $G_{m} (w)$ of groups are geometric, i.e., correspond to the spines of closed connected orientable 3-manifolds? 
\end{question}

Let us recall some examples of cyclically presented groups which are fundamental groups of 3-manifolds, as well as demonstrate  that some families of cyclically presented groups can not be realised as groups of hyperbolic 3-orbifolds (in particular, hyperbolic 3-manifolds) of finite volume. 

\subsection{Fibonacci groups}

We start with groups, corresponding to the defining word  $w(x_{1}, x_{2}, x_{3}) = x_{1} x_{2} x_{3}^{-1}$. Cyclically presented groups  
$$
F(2,m) \ = \ \langle x_1, \dots , x_m \ | \ x_i x_{i+1} \, = \, x_{i+2}, \quad i = 1, \dots , m \rangle, 
$$
where subscripts are taken by $\operatorname{mod} m$, are known as  \emph{the Fibonacci groups}. It was shown by Helling, Kim and Mennicke~\cite{HellingKimMennicke} that if number of generators is even, $m=2n$, $n\geq 4$, then the groups $F(2,2n)$ are the fundamental groups of hyperbolic 3-manifolds. These manifolds were called \emph{the Fibonacci manifolds}. Thus, for even $m \geq 8$ the group $F(2,m)$ is infinite and torsion-free. For odd $m$ situation is different. If $m$ is odd, then $F(2,m)$ has elements of finite order. More exactly, the product $v = x_{1} x_{2} \ldots, x_{m}$ has order two (see  \cite{BardakovVesnin} for details). Therefore, for odd $m$ the group $F(2,m)$ can not be the fundamental group of a hyperbolic 3-manifold. Moreover, it was shown by Maclachlan~\cite{Maclachlan}, that $F(2,m)$ can not be the group of hyperbolic 3-orbifold of finite volume. Recently, Howie and Williams~\cite{Howie} proved that for odd $m \geq 3$ the group $F(2,m)$ is the fundamental group of a 3-manifold if and only if $m=3,5,7$. In all this cases the group is finite cyclic.

A straightforward generalisation of the groups  $F(2,m)$, namely, the groups $F(r,m)$ for $r \ge 2$ and $m \ge 3$ were defined in~\cite{JWW} as follows: 
$$
F(r,m) \, = \, \langle x_1, \dots , x_m \ | \  x_i x_{i+1} \, \dots x_{i+r-1}= \, x_{i+r}, \quad i = 1, \dots , m \rangle ,
$$
where subscripts are taken by $\operatorname{mod} m$. The groups $F(r,m)$ are usually called~\emph{Fi\-bo\-nac\-ci groups} too. A survey of results about finiteness of these groups is given in~\cite{Johnson}. Using the same ideas as in~\cite{Maclachlan}, Szczepanski~\cite{Szcz} proved the following result: if $r$ is even and $m \ge r$ is odd, then the Fibonacci group $F(r,m)$ can not be the fundamental group of hyperbolic 3-manifold of finite volume. 

\subsection{Sieradski groups} \label{subsection2.3}

One more interesting class of cyclically presented groups corresponds to the defining word $w(x_{1}, x_{2}, x_{3}) = x_{1} x_{3} x_{2}^{-1}$. Cyclically presented groups 
$$
S(m) = \langle x_1, x_2, \ldots , x_{m} \, \mid \,  x_{i} x_{i+2} = x_{i + 1}, \quad i=1, \ldots m \rangle, 
$$
where all subscripts are taken by $\operatorname{mod} m$, were introduced by Sieradski~\cite{S}. In~\cite{CavicchioliHegenbarthKim} these groups were called \emph{the Sieradski groups} as well as a more general class of groups was introduced. The groups  
$$
\begin{gathered}
S(m,p,q) = \langle x_1, \ldots , x_m \, \mid \,  \hfill \\ 
{x_i} \, x_{i + q} \, \cdots \, x_{i + (q - 1)d q - q} \,  x_{i + (q - 1)d q} \,  = \, x_{i + 1} \, x_{i+q+1} \,  \cdots \,  x_{i + (q - 1)d q - q + 1} , \\ \hfill   
 i=1, \ldots, m \rangle ,   
\end{gathered}
$$
are called  \emph{the generalised Sieradski groups}. As above,  all subscripts are taken by $\operatorname{mod} m$. Parameters $p$ and $q$ are co-prime integers such that  $p = 1 + d q$, $d \in \mathbb{Z}$. Cavicchioli, Hegenbarth and Kim proved the following result. 
 
\begin{theorem} \cite{CavicchioliHegenbarthKim} \label{theorem1} 
The cyclic presentation $S(m,p,q)$ corresponds to a spine of the $m$-fold cyclic covering of the 3-sphere $S^{3}$ branched over the torus knot $T(p,q)$, i.e., the Brieskorn manifold $\mathcal B(m,p,q)$. 
\end{theorem}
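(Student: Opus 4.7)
The plan is to construct an explicit cyclically symmetric spine of the Brieskorn manifold $\mathcal{B}(m,p,q)$ whose associated presentation complex gives precisely $S(m,p,q)$. The natural starting point is the braid description recalled in subsection~1.2: the torus knot $T(p,q)$ is the closure of the braid $\beta = (\sigma_1 \sigma_2 \cdots \sigma_{p-1})^q \in B_p$, so the exterior of $T(p,q)$ fibres over $S^1$ with fibre a $p$-punctured disk and monodromy $\beta$. Capping off with the braid axis gives a cell structure on $S^3$ in which the knot is a subcomplex, and the open-book structure makes the $\mathbb{Z}/m$-rotational symmetry in the $S^1$-direction manifest.

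First, I would take the $m$-fold cyclic branched cover of $S^3$ along $T(p,q)$. By the lemma of Milnor quoted in subsection~1.2 this manifold is $\mathcal{B}(m,p,q)$, and the cell structure pulls back to a $\mathbb{Z}/m$-invariant cell structure consisting of $m$ cyclically permuted fundamental domains. Second, I would collapse equivariantly: remove the interior of one invariant 3-ball (or collapse a maximal equivariant tree of free 3-cells) to leave a 2-dimensional polyhedron $P \subset \mathcal{B}(m,p,q)$ that is a spine. By construction $P$ inherits a free $\mathbb{Z}/m$-action with $m$ orbits of 1-cells and $m$ orbits of 2-cells, so the resulting presentation of $\pi_1(\mathcal{B}(m,p,q))$ is automatically $m$-cyclic with some defining word $w$, and $P$ is the presentation complex $K_{\mathcal{P}}$.

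Third, I would identify $w$ by tracing the boundary of one 2-cell through the braid monodromy. Each crossing $\sigma_k$ contributes a Wirtinger-type relation between the two strand labels that pass through it, and after iterating $q$ times around the axis (with the indexing shift controlled by $p = 1+dq$) the sequence of generators and inverses accumulated along one boundary loop is exactly the word in the definition of $S(m,p,q)$. The main obstacle will be twofold: ensuring that the equivariant collapse really produces a spine rather than merely a 2-complex with the correct fundamental group, and matching the attaching word letter-for-letter (including order and inversion) with the one prescribed by $S(m,p,q)$. Both require careful bookkeeping of the orientation of strands and the cyclic ordering of the branched-cover sheets, but modulo this the identification is forced by the braid combinatorics of $(\sigma_1 \cdots \sigma_{p-1})^q$.
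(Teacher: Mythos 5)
This theorem is quoted from \cite{CavicchioliHegenbarthKim}; the paper you are reading gives no proof of it, so your proposal can only be measured against the strategy of that reference and against the analogous arguments the authors do carry out (theorem~\ref{teo4} and propositions~\ref{ass1}, \ref{ass2}). Those arguments run in the opposite direction from yours: one first builds an explicit $2$-complex realizing the cyclic presentation, checks by an Euler-characteristic/identification-space argument that it is a spine of a closed orientable $3$-manifold, and only then uses the visible order-$m$ symmetry of the associated Heegaard diagram to recognize the quotient orbifold and hence identify the manifold as a branched covering. Your plan --- start from the branched cover, equivariantly collapse, and read off the word --- is legitimate in outline, but it hides the actual content of the theorem in the phrase ``careful bookkeeping'': nothing in an equivariant collapse of a generic $\mathbb{Z}/m$-invariant cell structure guarantees a spine with exactly one orbit of $m$ edges and one orbit of $m$ thirteen-gon (in general, $w$-gon) faces whose attaching maps spell the specific word defining $S(m,p,q)$. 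Producing that combinatorial structure is the theorem, not a postprocessing step; a Reidemeister--Schreier reduction of the Wirtinger presentation gives you the cyclic presentation of $\pi_1$, but being a presentation of the fundamental group is strictly weaker than corresponding to a spine, as the discussion of the Fibonacci groups $F(2,m)$ for odd $m$ in section~\ref{sec2} makes clear.

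There is also a concrete error in your setup. The exterior of $T(p,q)$ does not fibre over $S^{1}$ with fibre a $p$-punctured disk and monodromy $(\sigma_1\cdots\sigma_{p-1})^{q}$; that is the fibration of the complement of the \emph{braid axis}, in which the closed braid sits transverse to the disk pages. The exterior of $T(p,q)$ itself fibres with fibre the Milnor fibre (a once-punctured surface of genus $(p-1)(q-1)/2$), and it is rotation in \emph{that} open book --- equivalently, the deck transformation of the cover dual to the Seifert surface --- which realizes the $\mathbb{Z}/m$ symmetry of the $m$-fold cyclic cover branched over $T(p,q)$. A $2\pi/m$ rotation in the $S^1$-direction of the braid-axis open book is related to covers branched over the axis (an unknot), not over $T(p,q)$, and for general $m$ it is not even a symmetry of the pair. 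Finally, the deck group acts on $\mathcal B(m,p,q)$ with the preimage of the knot as fixed-point set, so the claimed ``free'' $\mathbb{Z}/m$-action on the spine needs justification (in the complex $\mathcal P_n$ of figure~\ref{fig1} the symmetry fixes the vertices $N$ and $S$), and you want a single orbit of size $m$ in each dimension, not ``$m$ orbits'' of cells. As written, the proposal would need the fibration corrected and the collapse replaced by an explicit polyhedral schema before it could be completed.
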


In particular, the Sieradski groups $S(m) = S(m,3,2)$ correspond to spines of manifolds $\mathcal B(m,3,2)$ which are  $m$-fold cyclic coverings of $S^{3}$ branched over the trefoil knot $T(3,2)$. 

Below we will interested in generalised Sieradski groups with parameter $q=2$. In this case $p=1 + 2d$ and we get the following presentation:  
$$
\begin{gathered} 
S(m,2d+1,2) = \langle x_{1}, x_{2}, \ldots, x_{m} \ | \   x_{i} x_{i+2} \cdots x_{i+2d} \ = \ x_{i+1} x_{i+3} \cdots x_{i+2d-1}, \\ \hfill \quad i=1, \ldots, m \rangle.
\end{gathered}
$$

\subsection{Generalised Fibonacci groups}

Consider two families of groups which were introduced in~\cite{C-R} and called \emph{the generalised Fibonacci groups}. The first family consists of the groups $F(r,m,k)$, where $r \ge 2$, $m \ge 3$, $k \ge 1$, given by   
$$
F(r,m,k)  =  \langle x_1, \dots , x_m \ | \ x_i x_{i+1} \cdots x_{i+r-1} =  x_{i+r-1+k}, \quad i = 1, \dots , m \rangle ,
$$
where all subscripts are taken by $\operatorname{mod} m$. Obviously, $F(r,m,1)=F(r,m)$. 

The following statement generalises preceding results on groups $F(2,m)$ and $F(r,m)$, obtained in~\cite{Maclachlan} and~\cite{Szcz}, respectively. 

\begin{theorem} \cite{Sz-Ves3} \label{theorem2}
Assume that  $r$ is even,  $m$ is odd and $(m, r+2k-1)=1$. Then the generalised Fibonacci group $F(r,m,k)$ can not be realised as the group of hyperbolic 3-orbifold of finite volume. 
\end{theorem}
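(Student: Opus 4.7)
The proof plan follows the method pioneered by Maclachlan for $F(2,m)$ and extended by Szczepanski to $F(r,m)$: exhibit inside $F(r,m,k)$ a nontrivial central element of finite order, whose existence is incompatible with being a lattice in $\mathrm{Isom}(\mathbb{H}^3)$.

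The natural candidate is the total cyclic product $v = x_1 x_2 \cdots x_m$. I would first exploit the cyclic symmetry of the presentation: the shift automorphism $\eta\colon x_i \mapsto x_{i+1}$ sends $v$ to $x_1^{-1} v x_1$, so $\eta^j(v)$ is conjugate to $v$ for every $j$. Consequently, checking that $v$ commutes with one generator promotes, via $\eta$, to centrality of $v$. The commutation relation $v x_1 = x_1 v$ should be established by repeatedly substituting the defining relators $x_i x_{i+1} \cdots x_{i+r-1} = x_{i+r-1+k}$ into $v$ and regrouping the $m$ generators into blocks of length $r$; here the assumption that $r$ is even is what allows the blocks to recombine compatibly after shifting by $k$ positions.

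Next I would verify that $v$ is genuinely of order $2$ and not trivial. The relation $v^2 = 1$ is proved by rewriting the cyclic product $v$ using the relators $r/2$ times in a way that reverses the order of the factors, yielding $v = v^{-1}$; the parity hypotheses on $r$ and $m$ make this rewriting terminate consistently after going once around the cycle. For nontriviality I would pass to the abelianization $F(r,m,k)^{\mathrm{ab}} = \mathbb{Z}^m/A$, where $A$ is the circulant matrix whose rows have $+1$ in positions $i,i{+}1,\dots,i{+}r{-}1$ and $-1$ in position $i{+}r{-}1{+}k$. The image of $v$ corresponds to the all-ones vector; it survives in $F(r,m,k)^{\mathrm{ab}}/2$ precisely because $m$ is odd and $\gcd(m, r+2k-1)=1$, the latter coprimality ensuring that the relevant eigenvalue of the associated circulant polynomial evaluated at $1$ is an odd integer, so that $\bar v$ has order exactly $2$.

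Having produced a nontrivial central element $v$ of order $2$, the theorem follows from the standard fact that the fundamental group of a finite-volume hyperbolic $3$-orbifold embeds as a lattice in $\mathrm{Isom}(\mathbb{H}^3)$, and lattices in $\mathrm{Isom}(\mathbb{H}^3)$ have trivial center (no nontrivial isometry of $\mathbb{H}^3$ commutes with an entire lattice).

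The main obstacle is the combinatorial work in the middle step: proving simultaneously that $v$ is central and that $v^2=1$ from the defining relations. Both the parity conditions and the arithmetic condition $(m, r+2k-1)=1$ must be threaded through the rewriting carefully, since weakening any of them invalidates either the block-regrouping (centrality) or the abelianization computation (nontriviality). The concluding Kleinian-group step is then immediate.
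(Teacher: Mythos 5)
First, a caveat on the comparison: the paper does not prove Theorem~\ref{theorem2} at all --- it is quoted from \cite{Sz-Ves3} (see also \cite{Sz-Ves2}, and the earlier \cite{Maclachlan}, \cite{Szcz} for $k=1$) --- so your attempt can only be measured against the strategy of those sources. Your overall plan is indeed of that family: manufacture central torsion and play it off against the fact that a non-elementary Kleinian group of finite covolume has trivial centre. The reduction of centrality of $v=x_1\cdots x_m$ to the single commutation $[v,x_1]=1$ via the shift automorphism is correct. But the two combinatorial identities you rely on ($v$ commutes with $x_1$, and $v^2=1$) are only described, not derived; "regrouping into blocks of length $r$" is precisely the technical core of the cited proofs and cannot be treated as a black box.

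The genuine gap, however, is the nontriviality step, and it is not repairable as stated. Summing the abelianised relators $x_i+x_{i+1}+\cdots+x_{i+r-1}-x_{i+r-1+k}$ over $i$ gives $(r-1)\bar v=0$, and $r-1$ is odd because $r$ is even; hence in $F(r,m,k)^{\mathrm{ab}}\otimes\mathbb{Z}/2$ one has $\bar v=(r-1)\bar v=0$ always. So $\bar v$ \emph{never} survives mod $2$ --- the opposite of what you assert --- and once $v^2=1$ is known, no abelian quotient can certify $v\neq 1$. Worse, $v$ genuinely can be trivial under the hypotheses of the theorem: $F(2,5,1)=F(2,5)\cong\mathbb{Z}_{11}$ satisfies $r=2$ even, $m=5$ odd and $(5,\,2+2\cdot1-1)=(5,3)=1$, yet, being of odd order, it contains no element of order two whatsoever, so the claimed nontrivial central involution does not exist. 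Consequently the argument must be restructured: one cannot prove $v\neq 1$ unconditionally; instead one shows that \emph{if} $F(r,m,k)$ were a finite-covolume Kleinian group then $v$ would be forced to be trivial, and then derives the contradiction from what the relation $v=1$, together with the conjugation action of $v$ on the generators, forces on the group --- this is where the coprimality $(m,r+2k-1)=1$ actually enters. Your concluding step about lattices in $\operatorname{Isom}(\mathbb{H}^3)$ is fine, but it rests on an element that need not exist.
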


Let us discuss some groups $F(r,m,k)$ which do not satisfy to conditions of theorem~\ref{theorem2}. Suppose 
\begin{equation}
m = r + 2k - 1 . \label{eqn_cyclic4}
\end{equation}

For $k=1$ in  (\ref{eqn_cyclic4})  we have $m=r+1$. Corresponding groups $F(m-1,m,1)$ are the Fibonacci groups  $F(m-1,m)$ with the following cyclic presentation: 
$$
F(m-1,m) = \langle x_{1}, x_{2}, \ldots, x_{m} \ | \ x_{i} x_{i+1} \cdots x_{i+m-2} = x_{i+m-1}, \quad i=1,  \ldots, m \rangle. 
$$
Recall the definition of \emph{the generalised Neuwirth groups} $\Gamma^{\ell}_m$ from~\cite{Sz-Ves3}:
$$
\Gamma^{\ell}_m = \langle x_1, \dots , x_m \ | \ x_i x_{i+1} \ldots x_{i+m-2} = x_{i+m-1}^{\ell},  \quad  i=1, \ldots,  m \rangle ,
$$
where $m \geq 3$ and $\ell \geq 1$. These groups generalise the groups introduced by Nuewirth in~\cite{N}. In particular, $\Gamma^1_m = F(m-1, m)$. 

\begin{proposition} \cite{Sz-Ves3} 
The groups $\Gamma^{\ell}_m$ are the fundamental groups of the fibered Seifert spaces $\Sigma_{m}^{\ell}$ with parameters 
$$
\Sigma_m^{\ell} \ = \ ( 0 \ {\small o} \ 0 \ | \ -1 ; \ \underbrace{(\ell+1,1), \ (\ell+1,1) , \ \dots , (\ell+1,1)}_{m~{\rm times}} ) .
$$
\end{proposition}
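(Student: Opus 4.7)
The plan is to show via Tietze transformations that the cyclic presentation of $\Gamma_m^{\ell}$ is equivalent to the standard fundamental group presentation of the Seifert fibred space $\Sigma_m^{\ell}$. For a closed orientable Seifert fibration with genus-zero base, Euler number $b=-1$, and $m$ exceptional fibres of type $(\ell+1,1)$, the standard presentation reads
$$
\pi_1(\Sigma_m^{\ell}) \,=\, \langle a_1,\dots,a_m,h \mid [a_i,h]=1,\ a_i^{\ell+1}h=1,\ a_1 a_2\cdots a_m = h^{-1}\rangle,
$$
so the task reduces to exhibiting a central element $h$ in $\Gamma_m^{\ell}$, verifying the fibre relations $x_i^{\ell+1}h=1$, and identifying the cyclic product $x_1\cdots x_m$ with $h^{-1}$.

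The heart of the argument is a pair of parallel computations of the word $P_i := x_i x_{i+1}\cdots x_{i+m-1}$. Multiplying the $i$-th defining relation on the right by $x_{i+m-1}$ gives $P_i = x_{i+m-1}^{\ell+1}$; on the other hand, recognising $x_{i+1}\cdots x_{i+m-1}=x_i^{\ell}$ from the $(i+1)$-st defining relation (with $x_{i+m}=x_i$) yields $P_i=x_i\cdot x_i^{\ell}=x_i^{\ell+1}$. Comparing the two expressions produces $x_i^{\ell+1}=x_{i+m-1}^{\ell+1}$ for every $i$, so all $(\ell+1)$-st powers of the generators coincide with a single element $H$. Centrality of $H$ then follows from the conjugation identity $P_{i+1}=x_i^{-1}P_i x_i$ (immediate from the definition of $P_i$) combined with $P_i=H$ for all $i$, which forces $x_i^{-1}Hx_i=H$. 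Setting $i=1$ in $P_i=H$ moreover gives the cyclic word relation $x_1 x_2\cdots x_m=H$.

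At this point I would introduce the auxiliary generator $h:=H^{-1}$; the derived relations then translate to $[x_i,h]=1$, $x_i^{\ell+1}h=1$, and $x_1\cdots x_m=h^{-1}$, which is exactly the Seifert presentation above. Conversely, starting from the Seifert presentation, conjugating $x_1\cdots x_m=h^{-1}$ successively by $x_1,x_2,\dots$ and using centrality of $h$ gives $P_i=h^{-1}$ for all $i$, from which the original cyclic relation $x_ix_{i+1}\cdots x_{i+m-2}=x_{i+m-1}^{\ell}$ follows after cancelling $x_{i+m-1}$. Hence the two presentations are Tietze equivalent, and the isomorphism $\Gamma_m^{\ell}\cong\pi_1(\Sigma_m^{\ell})$ follows. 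The main obstacle is spotting the \emph{central fibre} $H=x_i^{\ell+1}$ hidden in the cyclic relations and verifying simultaneously that these powers are all equal and central; once $H$ is in hand the translation to Seifert form is essentially bookkeeping, and the only delicate point is matching the sign of the Euler number $b=-1$ with the single cyclic-product relation $x_1\cdots x_m=h^{-1}$.
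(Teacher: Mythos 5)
Your argument is correct and complete: the two computations of $P_i=x_ix_{i+1}\cdots x_{i+m-1}$ (giving $x_{i+m-1}^{\ell+1}$ from the $i$-th relation and $x_i^{\ell+1}$ from the $(i+1)$-st) do yield a single central element $H$, and the passage to and from the Seifert presentation with $h=H^{-1}$ is a genuine Tietze equivalence in both directions. Note that the paper itself gives no proof of this proposition --- it is quoted from the cited reference [Sz-Ves3] --- and your derivation is essentially the standard argument used there, so there is nothing to contrast. The only point to watch is the one you already flagged: the identification of $b=-1$ and $\beta_i=1$ depends on the sign convention chosen for the relations $q_i^{\alpha_i}h^{\beta_i}=1$ and $q_1\cdots q_m=h^{-b}$, but your choices are internally consistent (and are confirmed by the case $m=3$, $\ell=1$, where both sides give the quaternion group $Q_8$ and the quaternionic space).
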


For $r=2$ in (\ref{eqn_cyclic4}) we have  
$$
F(2,2k+1,k) = \langle x_1, \dots , x_{2k+1} \ | \ x_i x_{i+1} =  x_{i+1+k}, \quad i=1, \dots , 2k+1 \rangle .
$$
It is easy to see~\cite{Sz-Ves2} that the groups $F(2,2k+1,k)$ and the Sieradski groups 
$$
S(2k+1) =  \langle a_{1}, a_{2}, \ldots, a_{2k+1} \, | \, a_{i} a_{i+2} = a_{i+1}, \quad i = 1, \ldots, 2k+1 \rangle  
$$ 
are isomorphic under the following correspondence of generators: 
$$
\left(
\begin{array}{llllllllll}
x_1 & x_2 & x_3 & \dots & x_k & x_{k+1} & x_{k+2} & \dots & x_{2k} & x_{2k+1} \\
a_1 & a_3 & a_5 & \dots & a_{2k-1} & a_{2k+1} & a_2 & \dots &
a_{2k-2} & a_{2k}
\end{array}
\right) .
$$
Theorem~\ref{theorem1} implies that the groups $F(2,2k+1,k)$ are the fundamental groups of 3-manifolds which are $(k+1)$-fold cyclic coverings of $S^{3}$ branched over the trefoil knot. 
 
The second family from~\cite{C-R} consists of the groups $H(r,m,k)$, where $r \ge 2$, $m \ge 3$, $k \ge 1$, given by  
$$
H(r,m,k)  =  \langle x_1, \dots , x_m \ | \ x_i x_{i+1} \cdots x_{i+r-1}  = x_{i+r} \cdots x_{i+r-1+k},   \quad i = 1, \dots , m \rangle ,
$$
where all subscripts are taken by $\operatorname{mod} m$. In particular, $H(r,m,1) = F(r,m)$. Let us compare the groups $H(r,m,k)$ with the generalised Sieradski groups $S(m,2k-1,2)$: 
$$
\begin{gathered} 
S(m,2k-1,2) = \langle a_1, \dots , a_m \ | \ a_i a_{i+2} \ \dots \ a_{i+2k-2} = a_{i+1} a_{i+3} \ \dots \ a_{i+2k-3}, \\ \hfill \quad i=1 \dots n \rangle . 
\end{gathered}
$$
It is easy to see~\cite{Sz-Ves2} that $H(k,2k-1,k-1)$ and $S(2k-1,2k-1,2)$ are isomorphic under the following correspondence of generators: 
$$
\left(
\begin{array}{lllllllll}
x_1 & x_2 & x_3 & \dots & x_k & x_{k+1} & x_{k+2} & \dots & x_{2k-1} \\
a_1 & a_3 & a_5 & \dots & a_{2k-1} & a_{2} & a_4 & \dots & a_{2k-2}
\end{array}
\right) .
$$
Theorem~\ref{theorem1} implies that for $k \ge 2$ the group $H(k,2k-1,k-1) \cong S(2k-1,2k-1,2)$ is the fundamental group of the closed 3-manifold which is the $(2k-1)$-fold cyclic covering of the 3-sphere branched over the torus knot  $T(2k-1,2)$, i.e.,  the Brieskorn manifold $\mathcal B(2k-1, 2k-1, 2)$. 

\subsection{Johnson~-- Mawdesley groups}

In~\cite{J-M} Johnson and Mawdesley defined the class of groups with the following cyclic presentations:  
$$
G_{n}(m,k) = \langle x_{1}, x_{2}, \ldots, x_{n} \, | \, x_{i} x_{i+m} = x_{i+k}, \quad  i = 1, \ldots, n \rangle.
$$
Obviously, this class contains the Fibonacci groups and the Sieradski groups, namely $G_{n}(1,2) = F(2,n)$ and $G_{n} (2,1) = S(n,3,2)$. Structure of the groups $G_{n}(m,k)$ was studied in \cite{BardakovVesnin} and properties of the groups $G_{n}(m,1)$ were considered in \cite{G-H}.  In \cite{Howie} there was investigated the following question: when  $G_{n}(m,k)$ is the fundamental group of a 3-manifold? The answer was done for all group with only two exceptional cases: for the groups  $G_{9}(4,1)$ and $G_{9}(7,1)$ the question is still open.   

\section{Geometric group presentations} \label{sec3}

\subsection{Cyclic presentations of $S(2n,3,2)$ with $n$ generators}

It was men\-tioned above that the Fibonacci group $F(2,2n)$, $n \geq 2$, is the fundamental group of a 3-manifold which is the $n$-fold cyclic cover of the 3-sphere $S^{3}$ branched over the figure-eight knot. This manifold is spherical if $n=2$, Euclidean if $n=3$, and hyperbolic if $n \geq 4$. The covering is related to symmetry of order $n$ corresponding to the permutation $x_{i} \to x_{i+2}$ on the generators $x_{1}, x_{2}, \ldots, x_{2n}$ of $F(2,2n)$, but not to the permutation  $x_{i} \to x_{i+1}$, which corresponds to the cyclic presentation of the group. It was observed in~\cite{Kim-Vesnin} that the $n$-fold cyclic covering corresponds to the following $n$-cyclic presentation: 
$$
\begin{gathered} 
F(2,2n) = G_{n}(y_{1}^{-1} y_{2}^{2} y_{3}^{-1} y_{2}) = \langle y_{1}, y_{2}, \ldots y_{n} \ | \ y_{i}^{-1} y_{i+1}^{2} y_{i+2}^{-1} y_{i+1} =1, 
\ i=1, \ldots, n \rangle, 
\end{gathered}
$$
where $y_{i} = x_{2i}$, $i=1, \ldots, n$.  The correspondence $\sum_{i} y_{i}^{k_{i}} \to \sum_{i} k_{i} t^{i}$ sent the defining word  $y_{0}^{-1} y_{1}^{2} y_{2}^{-1} y_{1}$ to the polynomial $-(t^{2} - 3t + 1)$, which is equivalent to the Alexander polynomial $\Delta(t) = t^{2} - 3t + 1$ of the figure-eight knot.  

Analogously, one can consider the generalised Sieradski group $S(2n,3,2)$ with even number of generators.  This group admits a cyclic presentation with $n$ gene\-ra\-tors:  
\begin{eqnarray*}
S(2n,3,2) & = & G_{2n} (x_{1} x_{3} x_{2}^{-1}) \\
& = & \langle x_{1}, x_{2}, \ldots, x_{2n} \ | \ x_{i} x_{i+2} = x_{i+1}, \quad i=1, \ldots, 2n \rangle \\ 
& = & \langle x_{1}, x_{2}, \ldots, x_{2n} \ | \ x_{2j} x_{2j+2} = x_{2j+1}, \quad x_{2j+1} x_{2j+3} = x_{2j+2}, \\ 
& & \qquad \qquad \qquad \qquad  \qquad \qquad j=1, \ldots, n \rangle \\ 
& = & \langle x_{2}, x_{4}, \ldots, x_{2n} \ | \ (x_{2j} x_{2j+2}) (x_{2j+2} x_{2j+4}) = x_{2j+2},  \quad j=1, \ldots, n \rangle \\ 
& = & \langle y_{1}, y_{2}, \ldots, y_{n} \ | \ y_{j} y_{j+1}^{2} y_{j+2} = y_{j+1}, \quad j = 1, \ldots, n \rangle \\ 
& = & G_{n} (y_{1} y_{2}^{2} y_{3} y_{2}^{-1}).  
\end{eqnarray*}
Theorem~\ref{theorem1} implies that $S(2n,3,2)$ is the fundamental group of the 3-manifold $\mathcal B(2n,3,2)$, which is the $2n$-fold cyclic covering of $S^{3}$ branched over the trefoil knot. Moreover, the cyclic presentation $S(2n,3,2)$ is geometric. 

It is natural to ask: if the cyclic presentation $G_{n} (x_{1} x_{2}^{2} x_{3} x_{2}^{-1})$ is geometric too? 

Answering in affirmative on the question, Howie and Williams constructed a corresponding spine in~\cite{Howie}. Following notations from~\cite{Howie}, we denote the ge\-ne\-ra\-tors by $x_{0}, x_{1}, \ldots, x_{n-1}$. Consider the 2-complex $\mathcal P_{n}$, presented in figure~\ref{fig1}. 
\begin{figure}[h]
\centering{
\includegraphics[totalheight=6.cm]{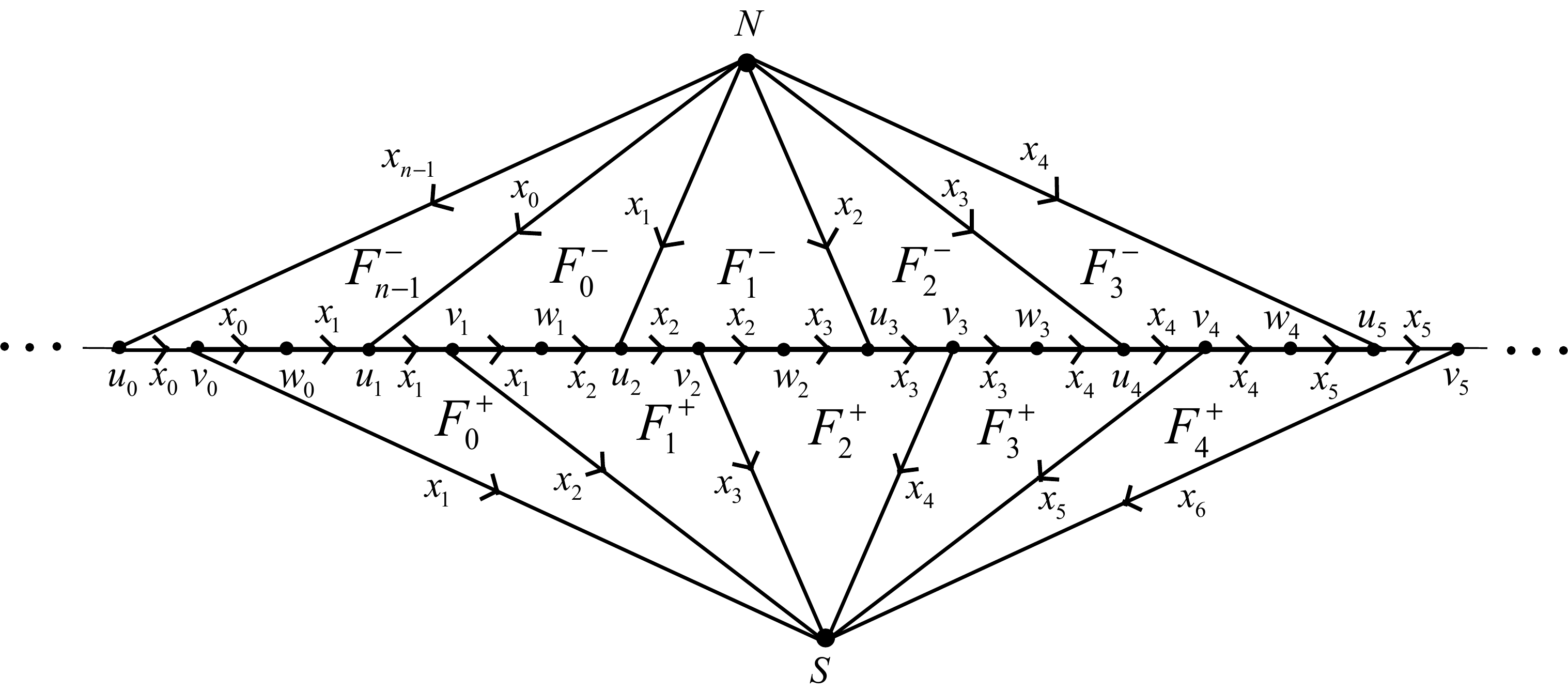}
\caption{The complex $\mathcal P_{n}$.} \label{fig1}
}
\end{figure}
It has $2n$ pentagonal 2-cells denoted by $F^{+}_{i}$, $F^{-}_{i}$, where $i=0, \ldots, n-1$. Let us orient edges and mark them by $x_{0}, x_{2}, \ldots, x_{n-1}$ as in the figure, and denote vertices by $N$, $S$, $u_{i}$, $v_{i}$, $w_{i}$ for $i=0, \ldots, n-1$.  

Let us define pairing $F_{i}$ of the 2-cells of  $\mathcal P_{n}$ in such a way that for each $i=0, \ldots, n-1$ the cells  $F^{-}_{i}$ and $F^{+}_{i}$ are identified according to the following order of vertices:   
$$
F_{i} : F_i^-  = (N u_{i+1} v_{i+1} w_{i+1} u_{i+2})  \longrightarrow  F_i^+ =  (v_{i} w_{i} u_{i+1} v_{i+1} S). 
$$
The pairing of 2-cells induces splitting of edges in classes of equivalent. In particular, for the complex, presented in figure~\ref{fig1}, the class of edges, marked by  ${x_3}$, consists of the following edges, equivalent under pairings $F_{i}$ and their inverses:  
$$
x_3 \, : \, [N, u_{4}] \stackrel{F_2}{\to} [v_{2}, S]  \stackrel{F_1^{-1}}{\to}  [w_{2}, u_{3}] \stackrel{F_2^{-1}}{\to}  [u_{3}, v_{3}] \stackrel{F_{2}^{-1}}{\to}  [v_{3}, w_{3}] \stackrel{F_{3}^{-1}}{\to}  [N, u_{4}], 
$$ 
where edges are presented by initial and terminal vertices and labels of arrows indicate gluings. 
  
The following property holds. 

\begin{theorem} \cite{Howie}  \label{teo3}
Cyclic presentation $G_{n} (x_{0} x_{1}^{2} x_{2} x_{1}^{-1})$ is geometric, i.e., corres\-ponds to a spine of a closed 3-manifold.  
\end{theorem}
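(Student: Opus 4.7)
The plan is to interpret $\mathcal{P}_{n}$ as the boundary $2$-sphere of an abstract $3$-ball $B^{3}$, tessellated into the $2n$ pentagons $F_{i}^{\pm}$, and then to define a candidate $3$-manifold $M_{n}$ as the quotient $B^{3}/\!\sim$ obtained by identifying the boundary pentagons in pairs via the prescribed maps $F_{i}\colon F_{i}^{-}\to F_{i}^{+}$. The image of $\mathcal{P}_{n}$ in this quotient inherits a $2$-complex structure with $n$ pentagonal $2$-cells, and is the candidate spine. To prove geometricity it then suffices to show (a) that $\pi_{1}(\mathcal{P}_{n}/\!\sim)$ coincides with $G_{n}(x_{0}x_{1}^{2}x_{2}x_{1}^{-1})$, and (b) that $M_{n}$ is in fact a closed $3$-manifold, since one can then recognise $\mathcal{P}_{n}/\!\sim$ as a spine by observing that $M_{n}\setminus\mathrm{Int}\,B^{3}$ deformation retracts onto it.

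For (a), I would compute the equivalence classes of oriented edges on $\partial B^{3}$ under the face pairings, following the template the excerpt has already given for the class labelled $x_{3}$. Each oriented edge lies on exactly one $F_{i}^{-}$ and one $F_{j}^{+}$, so the pairings generate a well-defined cyclic orbit on oriented edges; reading from the figure one should find exactly $n$ such orbits, to be labelled $x_{0},\dots,x_{n-1}$ in accordance with the chosen orientation. Reading the attaching map of each quotient pentagon around its boundary in terms of these labelled edge classes should produce precisely a cyclic conjugate of the defining word $x_{i}x_{i+1}^{2}x_{i+2}x_{i+1}^{-1}$, yielding the required presentation.

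The main obstacle is~(b). By the standard Seifert--Threlfall--Poincar\'e criterion for face identifications on a $3$-ball, $M_{n}$ is a closed $3$-manifold precisely when the link of every vertex in $B^{3}/\!\sim$ is a $2$-sphere; the only thing to check is therefore what happens at the vertex classes into which the set $\{N,S,u_{i},v_{i},w_{i}\}$ collapses under the pairings. I would first trace these orbits using the pairing rule $(N,u_{i+1},v_{i+1},w_{i+1},u_{i+2})\leftrightarrow(v_{i},w_{i},u_{i+1},v_{i+1},S)$, then assemble the link of each resulting vertex class by gluing together the corner neighbourhoods in the incident pentagons along the identified edges, and finally verify that the resulting closed surface has Euler characteristic~$2$. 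A useful a posteriori consistency check is given by the global equality $\chi(M_{n})=0$, which must hold for any closed $3$-manifold.

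Once (a) and (b) both hold, the image $\mathcal{P}_{n}/\!\sim$ is the $2$-skeleton of a cell decomposition of $M_{n}$ with a single $3$-cell removed, hence it is a spine realising the cyclic presentation $G_{n}(x_{0}x_{1}^{2}x_{2}x_{1}^{-1})$. This proves theorem~\ref{teo3}.
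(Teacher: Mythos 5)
Your plan is correct and is essentially the approach the paper takes (the paper only cites Howie--Williams for Theorem~\ref{teo3} itself, but its proof of the analogous Theorem~\ref{teo4} is exactly this: realise the complex as the face-paired boundary of a $3$-ball, read the relators off the $2$-cell boundaries, and invoke the Seifert--Threlfall criterion). The one remark worth making is that the Euler-characteristic computation you relegate to an ``a posteriori consistency check'' is in fact the entire criterion --- for orientable face identifications of a ball, $\chi(M_n)=0$ already forces every vertex link to be a $2$-sphere --- so your direct assembly of the vertex links, while valid, duplicates that step; here all of $N,S,u_i,v_i,w_i$ fall into a single vertex class, and $\chi(M_n)=1-n+n-1=0$ settles point (b) at once.
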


Spine $\mathcal P_{n}$ has a cyclic symmetry of order $n$ which corresponds to the cyclic presenta\-tion $G_{n} (x_{0} x_{1}^{2} x_{2} x_{1}^{-1})$. This symmetry induces an order $n$ cyclic symmetry of the manifold. We will describe the corresponding quotient space. 

\begin{proposition} \label{ass1}
For any $n$ the manifold from theorem~\ref{teo3} is an $n$-fold cyclic branched covering of the lens space~$L(3,1)$.
\end{proposition}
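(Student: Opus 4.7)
The strategy is to make explicit the order-$n$ cyclic symmetry of the spine $\mathcal{P}_n$, pass to the quotient at both the spine and manifold level, and identify the quotient as a spine of $L(3,1)$.

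I would first define the cellular order-$n$ automorphism $\tau : \mathcal{P}_n \to \mathcal{P}_n$ by
$$
\tau(F_i^{\pm}) = F_{i+1}^{\pm}, \quad \tau(u_i) = u_{i+1}, \quad \tau(v_i) = v_{i+1}, \quad \tau(w_i) = w_{i+1},
$$
together with $\tau(N) = N$, $\tau(S) = S$, and $\tau$ sending the edge labelled $x_i$ to the edge labelled $x_{i+1}$ (indices mod $n$). Compatibility with the pairings, $\tau \circ F_i \circ \tau^{-1} = F_{i+1}$, is immediate from $F_i : (N, u_{i+1}, v_{i+1}, w_{i+1}, u_{i+2}) \to (v_i, w_i, u_{i+1}, v_{i+1}, S)$. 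Since $\mathcal{P}_n$ is a spine of the closed orientable 3-manifold $M_n$ of Theorem~\ref{teo3} and $\tau$ fixes only the vertices $N$ and $S$ cellularly, $\tau$ extends to an orientation-preserving locally linear $\mathbb{Z}_n$-action on $M_n$ with 1-dimensional fixed set.

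Next I would form the quotient 2-complex $\mathcal{Q} := \mathcal{P}_n / \langle \tau \rangle$, consisting of two pentagons $F^{\pm}$, the vertex set $\{N, S, u, v, w\}$ (with $u = [u_i]$, etc.), a single edge-class $x$ absorbing all the labels $x_0, \ldots, x_{n-1}$, and the descended pairing
$$
F : F^- = (N, u, v, w, u) \longrightarrow F^+ = (v, w, u, v, S).
$$
Equating $x_0 = x_1 = x_2 = x$ in the defining word $x_0 x_1^2 x_2 x_1^{-1}$ yields the single relator $x \cdot x^2 \cdot x \cdot x^{-1} = x^3$, so $\pi_1(\mathcal{Q}) \cong \mathbb{Z}/3\mathbb{Z}$.

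By local linearity of the $\mathbb{Z}_n$-action, the orbit space $N_n := M_n/\langle \tau \rangle$ is a closed orientable topological 3-manifold admitting $\mathcal{Q}$ as a spine. The spherical space form theorem identifies $L(3,1)$ as the unique closed orientable 3-manifold with fundamental group $\mathbb{Z}/3\mathbb{Z}$, whence $N_n \cong L(3,1)$; the quotient map $M_n \to L(3,1)$ is then an $n$-fold cyclic covering branched along the 1-complex $\mathrm{Fix}(\tau)/\langle \tau \rangle$. The main technical obstacle I anticipate is verifying that $\mathcal{Q}$ is indeed a spine of a genuine closed 3-manifold, i.e.\ checking Matveev's link condition at each of the five vertex classes --- in particular at the fixed vertices $N$ and $S$ --- so as to guarantee that $M_n/\langle\tau\rangle$ is a manifold rather than a proper 3-orbifold.
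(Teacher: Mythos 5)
Your proposal reaches the right conclusion, but by a genuinely different route from the paper's, and the difference matters. The paper does not pass to the quotient spine and compute $\pi_1$; instead it converts the spine $\mathcal P_n$ into a Heegaard diagram of the manifold, observes that the diagram itself carries the rotational symmetry $\rho$ of order $n$ permuting the discs $F_i^{\pm}$, takes the quotient Heegaard diagram, and reduces it by a Singer move of type IB to the standard genus-one diagram of $L(3,1)$. This is elementary and constructive: it identifies the specific lens space directly from the combinatorics, with no appeal to any classification theorem.

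Your route has two soft spots. First, the extension of the cellular symmetry $\tau$ of the spine to a homeomorphism of $M_n$ is not automatic for an arbitrary spine; it holds here because the manifold is realized concretely as a ball (figure~\ref{fig1}) with face identifications and $\tau$ is a genuine rotation of that ball --- but that is precisely the picture that already hands you a symmetric Heegaard diagram, which is what the paper exploits instead. Second, and more seriously, the final identification ``closed orientable $3$-manifold with $\pi_1\cong\mathbb Z/3\mathbb Z$, hence $L(3,1)$'' is not the spherical space form theorem; it requires the elliptization theorem (one must first know the quotient is covered by $S^3$), and even then it uses the accident that for $p=3$ the two lens spaces $L(3,1)$ and $L(3,2)$ are homeomorphic. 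So your argument does close for this proposition, but it is far heavier than needed and does not transfer to the companion Proposition~\ref{ass2}: a closed $3$-manifold with $\pi_1\cong\mathbb Z/5\mathbb Z$ could a priori be $L(5,1)$ or $L(5,2)$, which are not homeomorphic, so the fundamental group alone cannot identify the quotient there. The Heegaard-diagram computation is what pins down $L(3,1)$ (respectively $L(5,1)$) rather than merely its fundamental group, which is why the paper argues that way.
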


\begin{proof} 
Let $\mathcal S(2n,3,2)$ be the closed 3-manifold, constructed from the spine $P_{n}$. To prove the statement  we will use the standard way to move from a spine to a Heegaard diagram of the manifold. As the result, we get the Heegaard diagram, presented in figure~\ref{fig2}.  
\begin{figure}[h]
\begin{center}
\includegraphics[totalheight=4.cm]{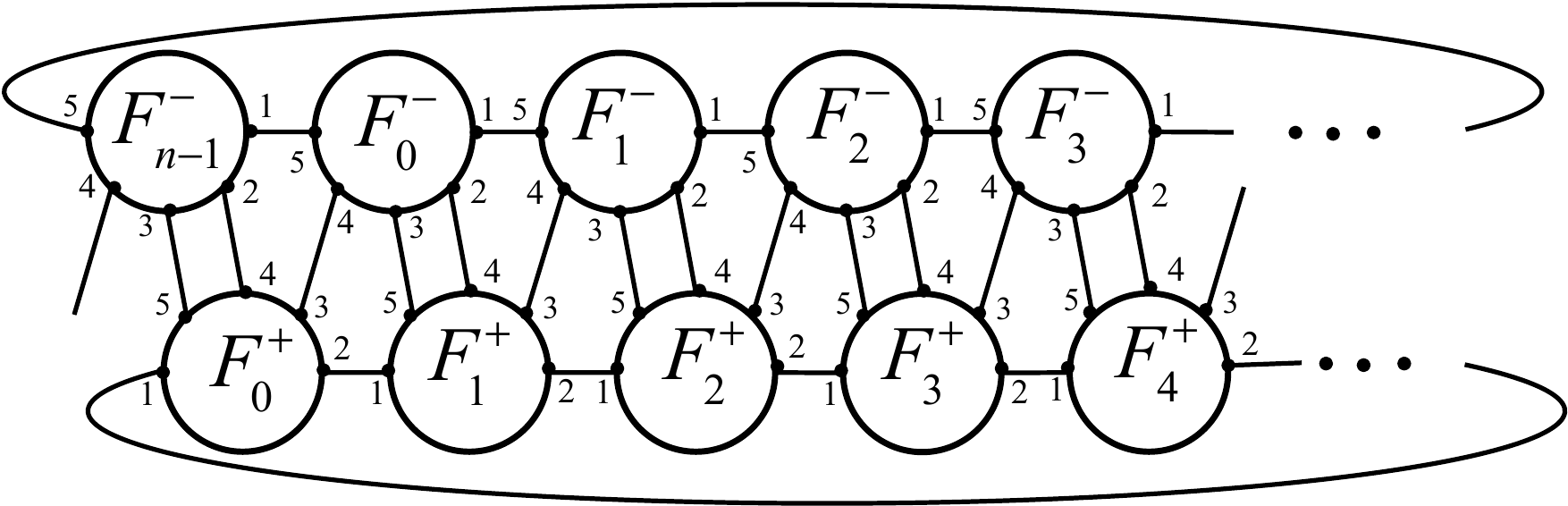}
\caption{Heegaard diagram of $\mathcal S(2n,3,2)$.}\label{fig2}
\end{center}
\end{figure}
This diagram admits a rotational symmetry of order $n$ which permutes discs cyclically: $F_{i}^{-} \to F_{i+1}^{-}$ and $F_{i}^{+} \to F_{i+1}^{+}$. Denote the symmetry by $\rho$ and its axe by $\ell$. The quotient space, corresponding to $\rho$, is a 3-orbifold and the image of $\ell$ is a singular set of it. By Singer's move of type IB \cite{Singer} we get the standard Heegaard diagram of the lens space $L(3,1)$, see figure~\ref{zin}. Thus, the underlying space of the 3-orbifold $\mathcal S(2n,3,2)  /  \rho$ is the lens space $L(3,1)$.
\begin{figure}[h]
\begin{center}
\includegraphics[totalheight=5.cm]{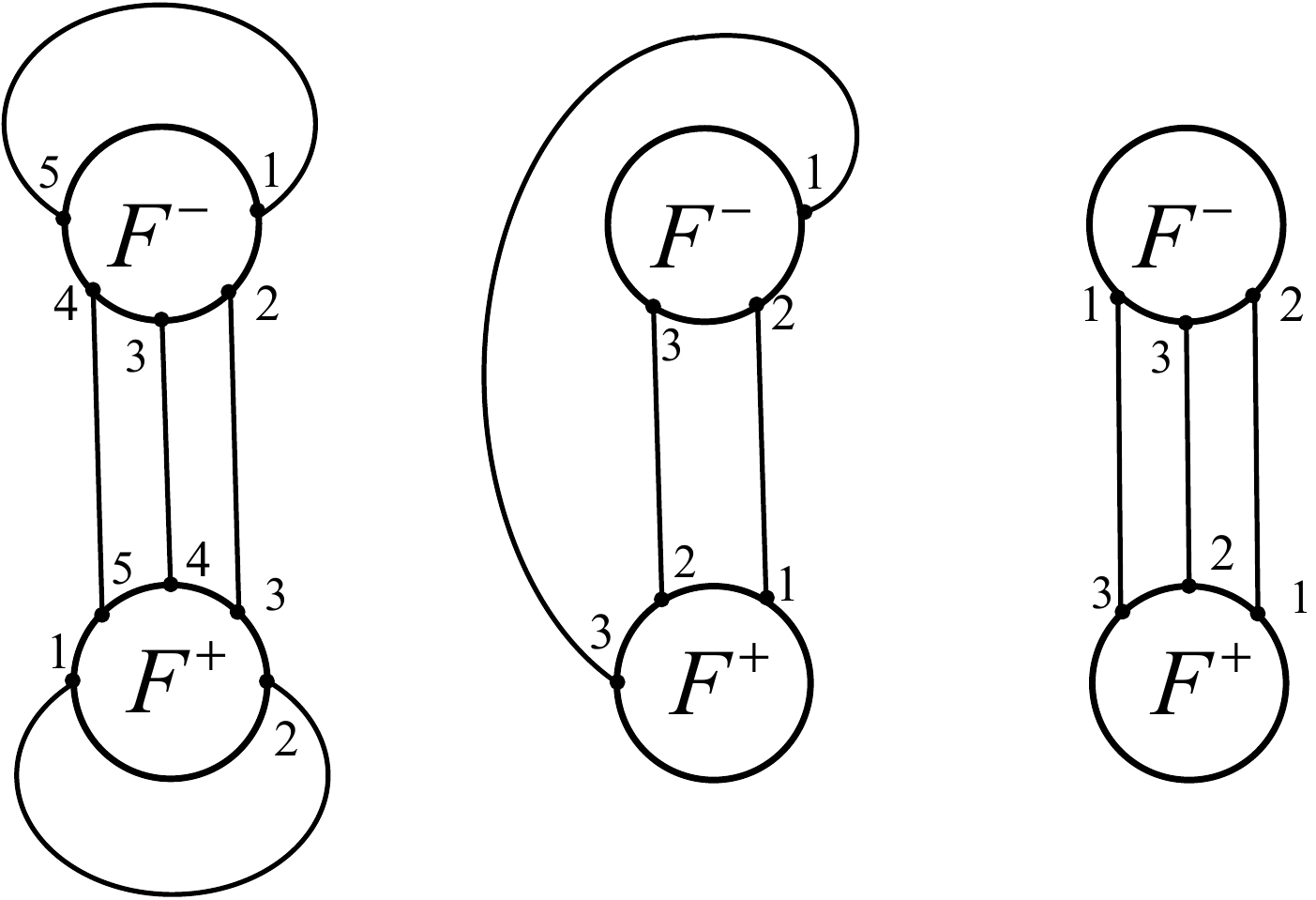} 
\caption{Simplifying a Heegaard diagram of $\mathcal S(2n,3,2)  /  \rho$.} \label{zin}
\end{center}
\end{figure}
Therefore, the manifold $\mathcal S(2n,3,2)$ is an $n$-fold cyclic branched covering of the lens space $L(3,1)$.  
\end{proof}

\subsection{Cyclic presentations of $S(2n,5,2)$ with $n$ generators}

Now we consider the generalised Sieradski groups $S(2n,5,2)$ with even number of generators. From cyclic presentation with $2n$ generators we move to cyclic presentation with $n$ generators:  
 \begin{eqnarray*}
S(2n,5,2) & = & G_{2n} (x_{1} x_{3} x_{5} x_{4}^{-1} x_{2}^{-1}) \\
& = & \langle x_{1}, x_{2}, \ldots, x_{2n} \, | \, x_{i+1} x_{i+3} x_{i+5} = x_{i+2} x_{i+4}, \quad i=1, \ldots, 2n \rangle \\ 
& = & \langle x_{1}, x_{2}, \ldots, x_{2n} \, | \, x_{j+1} x_{j+3} x_{j+5} = x_{j+2} x_{j+4},  \quad  \\ 
& & \qquad  \qquad  \qquad \quad x_{j+2} x_{j+4} x_{j+6} = x_{j+3} x_{j+5}, \quad j=2,4 \ldots, 2n \rangle \\ 
& = & \langle x_{1}, x_{2}, \ldots, x_{2n} \, | \, x_{j+5} = (x_{j} x_{j+2} x_{j+4})^{-1} x_{j+2} x_{j+4},  \quad  \\ 
& & \qquad  \qquad  \qquad \quad x_{j} x_{j+2} x_{j+4} = x_{j+1} x_{j+3}, \quad j=2,4 \ldots, 2n \rangle \\ 
& = & \langle x_{2}, x_{4}, \ldots, x_{2n} \, | \, x_{j} x_{j+2} x_{j+4} \left( x_{j+2}^{-1} x_{j}^{-1} x_{j-2} x_{j} x_{j+2} \right) \\
& &  \qquad \qquad  \qquad \cdot \left( x_{j}^{-1} x_{j-2}^{-1} x_{j-4} x_{j-2} x_{j} \right) = 1,  \quad  j=2, 4 \ldots, 2n \rangle \\ 
& = & \langle y_{1}, y_{2}, \ldots, y_{n} \, | \,  y_{i} y_{i+1} y_{i+2} y_{i+1}^{-1} y_{i}^{-1} y_{i-1} y_{i} y_{i+1} y_{i}^{-1} y_{i-1}^{-1} y_{i-2} y_{i-1} y_{i} =1, \\ 
& & \qquad \qquad i = 1, \ldots, n \rangle \\ 
& = & G_{n} (y_{3} y_{4} y_{5} y_{4}^{-1} y_{3}^{-1} y_{2} y_{3} y_{4} y_{3}^{-1} y_{2}^{-1} y_{1} y_{2} y_{3}  ) \\  
& = & G_{n} (y_{1} y_{2} y_{3} y_{3} y_{4} y_{5} y_{4}^{-1} y_{3}^{-1} y_{2} y_{3} y_{4} y_{3}^{-1} y_{2}^{-1} ).
\end{eqnarray*}
We will show that obtained cyclic presentation is geometric. To formulate the subsequence statement we will renumerate  generators similar to theorem~\ref{teo3}, namely, we put $x_i = y_{i+1}$, where $i = 0,1, \ldots, n-1$. 

\begin{theorem}  \label{teo4}
The cyclic presentation $G_{n} ( x_{0} x_{1} x_{2} x_{2} x_{3} x_{4} x_{3}^{-1} x_{2}^{-1} x_{1} x_{2} x_{3} x_{2}^{-1} x_{1}^{-1})$ is geometric, i.e., it corresponds to a spine of a closed 3-manifold.    
\end{theorem}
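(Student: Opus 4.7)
The plan is to adapt the explicit spine construction used by Howie and Williams in the proof of Theorem~\ref{teo3} to the longer defining word $w = x_0 x_1 x_2 x_2 x_3 x_4 x_3^{-1} x_2^{-1} x_1 x_2 x_3 x_2^{-1} x_1^{-1}$ of length $13$. Whereas their complex $\mathcal{P}_n$ was built from pentagonal $2$-cells, the analogous complex here must be built from tridecagonal $2$-cells. The overall strategy has the same three ingredients: construct a cellular $2$-complex with a natural cyclic $\mathbb{Z}/n$-symmetry, specify a pairing of its faces that reproduces the presentation, and verify the spine condition.

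First I would introduce $2n$ oriented $13$-gonal $2$-cells $F_i^\pm$, $i = 0, \ldots, n-1$, whose boundary edges are cyclically oriented and labelled by the letters of $w$ (for $F_i^-$) or of a reading compatible with the pairing (for $F_i^+$), with all generator subscripts shifted by $i$. I would introduce enough vertex classes, in analogy with the $N, S, u_i, v_i, w_i$ of figure~\ref{fig1}, so that the identifications dictated by a single pairing rule $F_i\colon F_i^- \to F_i^+$ match corresponding vertices unambiguously. The combinatorial structure should have an obvious cyclic rotation $\rho\colon F_i^\pm \mapsto F_{i+1}^\pm$ of order $n$.

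Next I would specify the pairing $F_i$ by listing the corresponding vertex orders, then trace the induced identifications of directed edges. Using the same kind of chain-of-arrows bookkeeping as illustrated in the excerpt for the class $x_3$, I would verify that the edges split into exactly $n$ equivalence classes, one for each generator $x_0, \ldots, x_{n-1}$, and that the boundary of each identified $13$-gon reads off a cyclic rotation of~$w$. This identifies the quotient complex with the presentation complex of $G_n(w)$.

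Finally I would verify the spine property: that a regular neighborhood of the quotient complex in a suitable $3$-manifold is the complement of a single $3$-ball. This reduces to checking that all vertex classes collapse to a single point with $2$-sphere link, determined by the combinatorial incidences at the corners of the $13$-gons. The main obstacle will be the combinatorial overhead: the tridecagonal cells carry many more edges and vertices than the pentagonal ones, and $w$ contains several inverse letters, so arranging the vertex labelling and the pairing so that the edge-class verification works out cleanly -- and so that the complex can be drawn concretely, as in figure~\ref{fig1} -- is the bulk of the work.
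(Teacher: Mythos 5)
Your proposal follows essentially the same route as the paper: the authors also build a $2$-complex $Q_n$ from $2n$ tridecagonal $2$-cells whose boundary labels read off the cyclic rotations of $w$, identify the cells in pairs, and conclude that the resulting closed pseudo-manifold is a spine of a closed orientable $3$-manifold. The only cosmetic difference is in the final verification, where the paper checks that the Euler characteristic of the quotient vanishes (citing Seifert--Threlfall) rather than examining vertex links directly, and it carries out the explicit construction only for $n=4$, noting that the general case is identical.
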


\begin{proof} To prove the statement we will construct a $2$-complex $Q_{n}$ with $2n$ 2-cells and oriented edges marked by $x_{i}^{\pm1}$.  Each 2-cell will be an 13-gon with the property that reading marks along its boundary gives some defining relation of the group. Moreover, for each defining relation there are exactly two 2-cell and their orientations are opposite. 

Since the defining relations are quite long, we will demonstrate the constructi\-on for a small example. Let us consider the 2-complex $Q_{4}$, presented in figure~\ref{new}. $Q_{4}$ has eight 2-cells. Looking at the figure we keep in mind that edges on the left side should be  identified in pairs with edges on the right side. And, also, that all vertical lines going upstairs meet in one point, as well as all vertical lines going downstairs meet in one point. The complex $Q_{4}$ corresponds to the cyclic presentation for $n=4$. Indeed, in this case we have four generators, say $x_{0}$, $x_{1}$, $x_{2}$ and $x_{3}$, and four defining relations, which we can write in the following form: 
$$
\begin{gathered}
x_{0} x_{1} x_{2} x_{2} x_{3} x_{0} x_{3}^{-1} x_{2}^{-1} x_{1} x_{2} x_{3} x_{2}^{-1} x_{1}^{-1} = 1, \cr 
x_{1} x_{2} x_{3} x_{3} x_{0} x_{1} x_{0}^{-1} x_{3}^{-1} x_{2} x_{3} x_{0} x_{3}^{-1} x_{2}^{-1} = 1, \cr 
x_{2} x_{3} x_{0} x_{0} x_{1} x_{2} x_{1}^{-1} x_{0}^{-1} x_{3} x_{0} x_{1} x_{0}^{-1} x_{3}^{-1} = 1, \cr 
x_{3} x_{0} x_{1} x_{1} x_{2} x_{3} x_{2}^{-1} x_{1}^{-1} x_{0} x_{1} x_{2} x_{1}^{-1} x_{0}^{-1} = 1. \cr     
\end{gathered} 
$$
It is easy to check that all four words in these relations appear in the process of reading marks along boundaries of the 2-cell in figure~\ref{new}. Indeed, we will get the first word by reading along the boundary of the 2-cell $F_{1}$ in the counterclockwise direction, as well as along  the boundary of the 2-cell  $\overline{F}_{1}$ in the clockwise direction. Analogously, other words correspond to pairs $F_{2}$ and $\overline{F}_{2}$, $F_{3}$ and $\overline{F}_{3}$,  $F_{4}$ and $\overline{F}_{4}$, respectively.  
\begin{figure}[h]
\begin{center}
\includegraphics[totalheight=6.0cm]{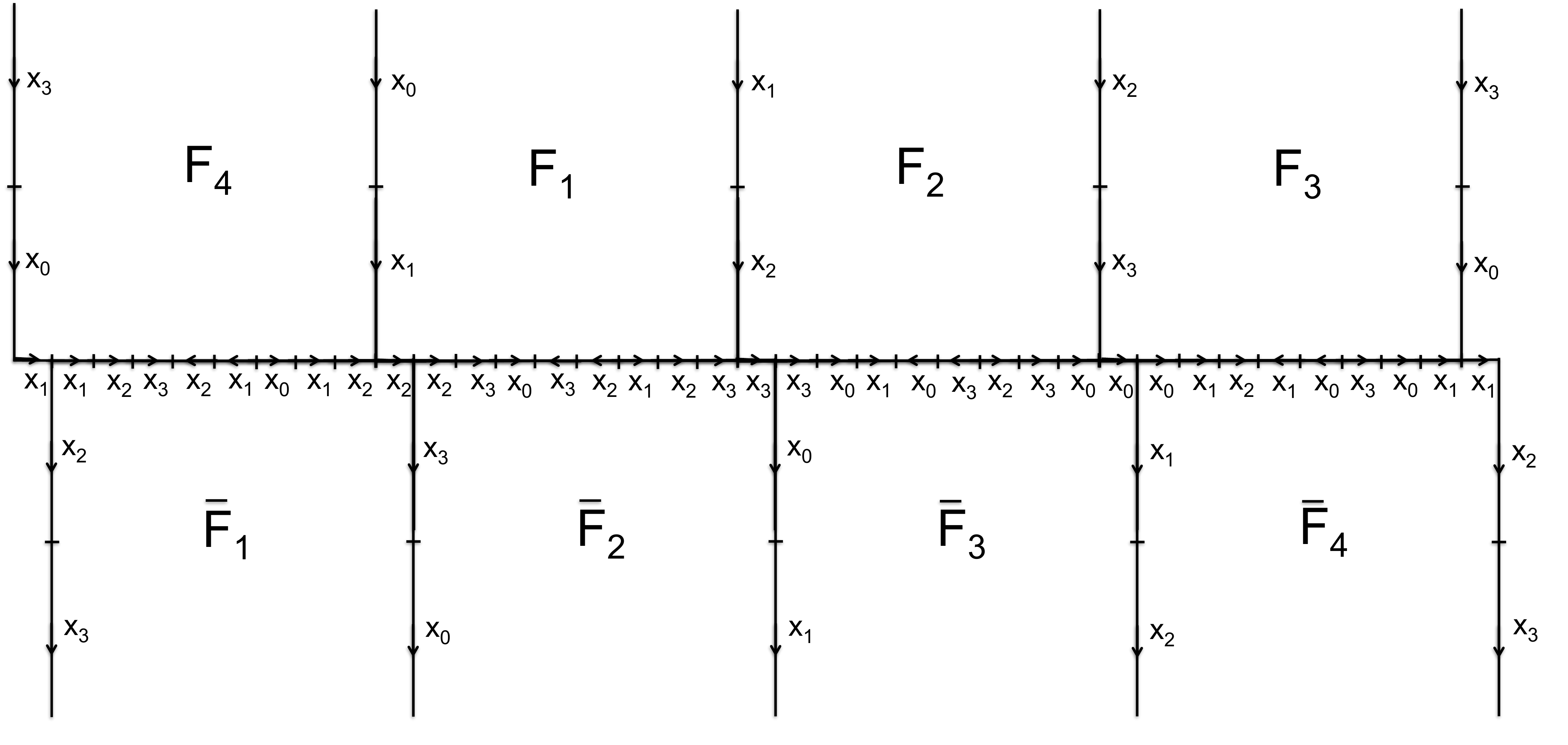} 
\caption{The complex $\mathcal Q_{4}$.} \label{new}
\end{center}
\end{figure}

Let us identify pairs of the 2-cells of $Q_{4}$ with equal words by following marks on their boundaries. These pairings induce splittings of 1-cells and 0-cells in classes of equivalent. The resulting quotient space is a closed 3-dimensional pseudo-manifold. It is ealy to check directly that its Euler characteristic vanishes. Therefore, complex $Q_{4}$ is a spine of a closed orientable 3-manifold~\cite{Seifert}.  The described construction and above arguments work in the same way for arbitrary $n$.   
\end{proof}

Let us denote by $\mathcal S(2n,5,2)$ the closed orientable 3-manifolds, corresponding to the spine $Q_{n}$. For small $n$ manifolds $\mathcal S(2n,5,2)$ can be distinguished by using \emph{3-manifold recognizer}~\cite{Recognizer}. For example, the computations for the case $n=4$ give us that $\mathcal S(8,5,2)$ is the Seifert manifold $(S^2,(4,1),(5,2),(5,2),(1,-1) )$. 

Analogously to proposition~\ref{ass1}, the following result holds. 

\begin{proposition} \label{ass2}
For each $n$ the manifold from theorem~\ref{teo4} is an $n$-fold cyclic covering of the lens space $L(5,1)$.
\end{proposition}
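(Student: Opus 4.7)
The plan is to parallel the argument used for Proposition~\ref{ass1}. Starting from the spine $Q_n$ constructed in Theorem~\ref{teo4}, I first pass to a genus-$n$ Heegaard diagram of $\mathcal{S}(2n,5,2)$ via the standard thickening procedure: a regular neighbourhood of $Q_n$ is a handlebody, and each pair $F_i,\overline F_i$ of $13$-gons produces a pair of meridian curves on its boundary, with the word read along each curve being the corresponding defining word of $G_n(w)$.

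The $n$-cyclic structure of the presentation yields a rotational symmetry $\rho$ of order $n$ of this Heegaard diagram, acting by $F_i^{\pm}\mapsto F_{i+1}^{\pm}$ with axis $\ell$ passing through the two common vertex clusters $N$ and $S$. The quotient $\mathcal{S}(2n,5,2)/\rho$ is then a $3$-orbifold whose singular set of order $n$ is the image of $\ell$, and whose underlying space is described by a reduced Heegaard diagram containing only one representative pair $F_1,\overline F_1$.

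I would then apply a sequence of Singer handle-slide moves (primarily of type IB, as in the proof of Proposition~\ref{ass1}) to reduce this quotient diagram to a standard genus-one diagram whose meridian winds five times around the longitude, that is, the canonical diagram of $L(5,1)$. This is the expected answer: $L(5,1)$ is the $2$-fold cyclic branched cover of $S^3$ over the torus knot $T(5,2)$, so the $2n$-fold cover $\mathcal B(2n,5,2)\to S^3$ factors as an $n$-fold cyclic branched cover $\mathcal S(2n,5,2)\to L(5,1)$ followed by the $2$-fold cover $L(5,1)\to S^3$.

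The main difficulty will be the combinatorial bookkeeping in this Singer simplification: each attaching curve in the raw Heegaard diagram carries a $13$-letter word (compared with the $5$-letter words in the pentagonal case of Proposition~\ref{ass1}), so the slides must be arranged so that $\rho$-equivariance is preserved throughout. The essential quantitative check is that after collapsing the $\rho$-redundant meridians, the surviving meridian realises exactly the slope $5/1$ on the torus; once this is verified, the $n$-fold cyclic branched covering statement follows immediately, just as in Proposition~\ref{ass1}.
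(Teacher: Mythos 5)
Your proposal follows essentially the same route as the paper's own proof: pass from the spine $Q_n$ to a Heegaard diagram of $\mathcal S(2n,5,2)$, use the order-$n$ rotational symmetry $\rho$ permuting the discs, and simplify the quotient diagram by Singer moves to the standard genus-one diagram of $L(5,1)$, exactly in parallel with Proposition~\ref{ass1}. The paper merely carries this out for the representative case $n=4$ (figure~\ref{fig5}) and appeals to the cyclic structure for general $n$, so your version is, if anything, slightly more explicit; the consistency check via the factorisation of $\mathcal B(2n,5,2)\to S^3$ through the $2$-fold branched cover $L(5,1)\to S^3$ is a reasonable addition not present in the paper.
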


\begin{proof}
Because of the cyclic symmetry, it is enough to consider an example with small parameters. Consider the Heegaard diagram of the manifold $\mathcal S(8,5,2)$ which is obtained from the complex $Q_{4}$, see figure~\ref{fig5}. By the construction, the diagram has a rotational symmetry $\rho$ of order $4$. 
\begin{figure}[h]
\begin{center}
\includegraphics[totalheight=4.cm]{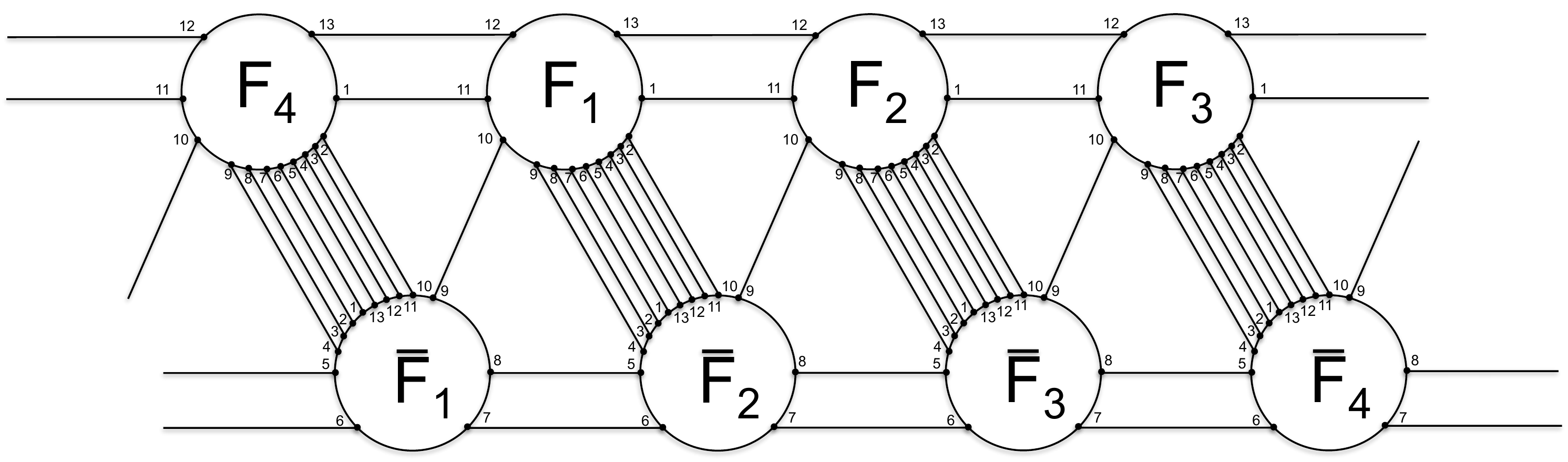} 
\caption{Heegaard diagram for the case $n=4$.} \label{fig5}
\end{center} 
\end{figure}
Analogously to the proof of proposition~\ref{ass1}, it is easy to see that the quotient space $\mathcal S(8,5,2) / \rho$ is the 3-orbifold whose underlying space is the lens space $L(5,1)$. 
\end{proof} 


\end{document}